\newtheorem{theorem}{Theorem}[section]
\newtheorem{lemma}[theorem]{Lemma}
\newtheorem{proposition}[theorem]{Proposition}
\newtheorem{corollary}[theorem]{Corollary}
\theoremstyle{definition}
\newtheorem{remark}[theorem]{Remark}
\numberwithin{equation}{section}
\newcommand{\N}{\mathbb{N}}
\newcommand{\mcb}{\mathscr{B}}
\newcommand{\mcg}{\mathscr{G}}
\newcommand{\mck}{\mathscr{K}}
\newcommand{\mci}{\mathscr{I}}
\newcommand{\mcj}{\mathscr{J}}
\newcommand{\mcl}{\mathscr{L}}
\newcommand{\supp}{\operatorname{supp}}
\newcommand\notni{\mathrel{\m@th\mathpalette\canc@l\owns}}
\newcommand\canc@l[2]{{\ooalign{$\hfil#1/\mkern1mu\hfil$\crcr$#1#2$}}}
\newcommand{\smashw}[2][l]{{\text{\makebox[0pt][#1]{$#2$}}}}
\renewcommand{\leq}{\ensuremath{\leqslant}}
\renewcommand{\le}{\ensuremath{\leqslant}}
\renewcommand{\ge}{\ensuremath{\geqslant}}
\subjclass[2010]{46H10,
47L10,
(primary); 
46B25,
46B26,
46B45,   	
47L20}
\keywords{Banach space, long sequence space, bounded
  operator, closed operator ideal, ideal lattice}
\title[Classifying closed ideals of operators on classical Banach spaces]{Classifying the closed ideals of bounded operators on two families of
  non-separable  classical Banach spaces}
\author[M.~Arnott]{Max Arnott}
\address{Department of Mathematics and Statistics, Fylde College, Lancaster University, Lancaster,
LA1 4YF, United Kingdom\medskip\mbox{}}
\email[Arnott]{m.arnott@lancaster.ac.uk}
\author[N.~J.~Laustsen]{Niels Jakob Laustsen}
\email[Laustsen]{n.laustsen@lancaster.ac.uk}
\begin{document}

\begin{abstract}
  \noindent We classify the closed ideals of bounded operators acting on the Banach spaces $\left(\bigoplus_{n \in \mathbb{N}} \ell_2^n\right)_{c_0} \oplus c_0(\Gamma)$ and $\left(\bigoplus_{n \in \mathbb{N}} \ell_2^n\right)_{\ell_1} \oplus \ell_1(\Gamma)$ for every uncountable cardinal~$\Gamma$.
\end{abstract}

\maketitle

\section{Introduction}
\noindent
Very few  Banach spaces $X$ are known for which the lattice of closed ideals of the Banach algebra $\mcb(X)$ of all bounded operators on~$X$ is fully understood. When~$X$ is finite-dimen\-sional, $\mcb(X)$ is simple, meaning that it contains no non-zero, proper ideals,
so we shall henceforth discuss infinite-dimensional Banach spaces only.

Our focus is on the ``classical'' case, that is,
Banach spaces that can be defined by elementary  means and/or were known to Banach and his contemporaries. There are currently only two  families of Banach spaces of this kind whose lattices of closed operator ideals are fully understood:
\begin{enumerate}[label={\normalfont{(\roman*)}}]
\item\label{daws}
Daws \cite[Theorem~7.4]{MD} has shown that for $X= c_0(\Gamma)$ or $X= \ell_p(\Gamma)$, where~$\Gamma$ is an  infinite cardinal and $1\le p<\infty$, the lattice of closed ideals of~$\mcb(X)$  is
\begin{align}\label{Eq:ClosedIdealsofc0Gamma}
    \{0\} \subsetneq \mck(X) \subsetneq \mck_{\aleph_1}(X) \subsetneq &\cdots  \subsetneq \mck_\kappa(X) \subsetneq \mck_{\kappa^+}(X)\subsetneq\cdots\notag\\ &\cdots\subsetneq \mck_\Gamma(X) \subsetneq \mck_{\Gamma^+}(X)=\mcb(X)\;.
\end{align} Here,
$\mck_\kappa(X)$ denotes the ideal of $\kappa$-compact operators for an uncountable cardinal~$\kappa$ (see page~\pageref{Defn:kappacompact} for the precise definition),   and 
$\kappa^+$ denotes the cardinal successor of $\kappa$. An alternative description of the closed ideals of~$\mcb(X)$  is given in \cite[Theorem~1.5]{JKS}; see also \cite[Theorem~3.7]{HK}.

\noindent Daws' theorem generalizes and unifies previous results of Calkin~\cite{Calkin} for $X=\ell_2$, Goh\-berg, Markus and Feldman for $X=c_0$ or $X=\ell_p$, $1\le p<\infty$, and Gramsch~\cite{Gr} and Luft~\cite{luft} independently for $X=\ell_2(\Gamma)$, where~$\Gamma$ is an arbitrary infinite cardinal.
\item Let $E= \left( \bigoplus_{n \in \mathbb{N}} \ell_2^n\right)_D$ for $D=c_0$ or $D=\ell_1$. Then it was shown in~\cite{LLR} and~\cite{LSZ}, respectively,
  that the lattice of closed ideals of~$\mcb(E)$  is
\begin{equation}\label{Eq:ClosedIdealsofBE}
  \{0\} \subsetneq \mck(E) \subsetneq \overline{\mcg_D}(E) \subsetneq \mcb(E) \;,\end{equation}  
where  $\overline{\mcg_D}(E)$ denotes the closure of the ideal of operators on~$E$ that factor through the space~$D$.
\end{enumerate}

We shall combine the above results to obtain two new ``hybrid'' families of Banach spaces, name\-ly~$\left(\bigoplus_{n \in \mathbb{N}} \ell_2^n\right)_{c_0} \oplus c_0(\Gamma)$ and its dual space $\left(\bigoplus_{n \in \mathbb{N}} \ell_2^n\right)_{\ell_1} \oplus \ell_1(\Gamma)$, for any uncountable cardinal~$\Gamma$, whose closed ideals of operators we classify. The precise statement is as follows. 

\begin{theorem}\label{mainthm}
  Let $(D,D_\Gamma) = (c_0,c_0(\Gamma))$ or $(D,D_\Gamma) = (\ell_1,\ell_1(\Gamma))$ for an uncountable cardinal~$\Gamma$, and set $E = \left(\bigoplus_{n \in \mathbb{N}} \ell_2^n\right)_D$ and $X= E\oplus D_\Gamma$. Then  the lattice of closed ideals of~$\mcb(X)$ is 
\begin{center}
    \begin{tikzcd}[tips=true,column sep=1em,row sep=1.5em]
\;&\{0\}\ar{d}&\;\\
\;&\mck(X)\ar{d} &\;\\
\;&\overline{\mcg_{D}}(X) \ar{dl} \ar{dr}&\;\\
\mck_{\aleph_1}(X)\ar{d} &\;& \mcj_{\aleph_2}(X)\ar{dll}\ar{d}\\
\mck_{\aleph_2}(X)\ar{d} & \;&\mcj_{\aleph_3}(X)\ar{d}\ar{dll}\\
\mck_{\aleph_3}(X)\ar{d} & \;& \mcj_{\aleph_4}(X)\ar{d}\\
\vdots\ar{d}&\;&\vdots\ar{d}\\
\mck_\Gamma(X)\ar{dr} & \; & \mcj_{\Gamma^+}(X)\ar{dl}\\
\;&\mcb(X)\smashw{\;,}&\;
\end{tikzcd}
\end{center}
where
\begin{align} \mcj_\kappa(X)= \biggl\{
\begin{pmatrix} T_{1,1} & T_{1,2}\\
T_{2,1} & T_{2,2}
\end{pmatrix}\in\mcb(X):
T_{1,1}&\in\overline{\mcg_{D}}(E),\, T_{1,2}\in\mcb(D_\Gamma;E),\notag\\[-2mm] T_{2,1}&\in\mcb(E;D_\Gamma),\, T_{2,2}\in\mck_\kappa(D_\Gamma)\biggr\}\label{Eq:mainthm} \end{align}
for each cardinal $\aleph_2\le\kappa\le\Gamma^+,$
where 
an arrow from an ideal~$\mci$ pointing to an ideal~$\mcj$ denotes that $\mci\subsetneq \mcj$ and there are no closed ideals of $\mcb(X)$ strictly contained in between~$\mci$ and~$\mcj$.
\end{theorem}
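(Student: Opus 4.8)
\emph{The plan.} I would work throughout with the $2\times 2$ matrix representation of operators on $X=E\oplus D_\Gamma$ used in defining $\mcj_\kappa(X)$, feeding in the known lattices: the four closed ideals of $\mcb(E)$ from \cite{LLR,LSZ} and the chain $\eqref{Eq:ClosedIdealsofc0Gamma}$ of closed ideals of $\mcb(D_\Gamma)$ from \cite{MD}. The crucial first step is a factorisation lemma: \emph{every operator between $E$ and $D_\Gamma$, in either direction, factors through $D$.} For $T\in\mcb(E;D_\Gamma)$ this is immediate, since $E$ is separable, so $T$ has separable range, which lies in the isometric copy of $D$ spanned by $\{e_\gamma:\gamma\in\Gamma_0\}$ for some countable $\Gamma_0\subseteq\Gamma$. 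For $T\in\mcb(D_\Gamma;E)$ I argue by cases: when $D=c_0$, the adjoint $T^*$ carries the separable space $E^*$ into $\ell_1(\Gamma)$ with separable range, hence is supported on a countable $\Gamma_0$, which forces $Te_\gamma=0$ for $\gamma\notin\Gamma_0$ and thus a factorisation through $c_0(\Gamma_0)\cong c_0$; when $D=\ell_1$, the range of $T$ is a separable subspace of $E$, hence a quotient of $\ell_1$, and the freeness of $\ell_1(\Gamma)$ lets me lift the bounded family $(Te_\gamma)_\gamma$ through that quotient map to factorise $T$ as $\ell_1(\Gamma)\to\ell_1\to E$. I would also record two auxiliary facts: $\mck_{\aleph_1}(D_\Gamma)=\mcg_D(D_\Gamma)$ (separable‑range operators coincide with those factoring through $D$), and $D$ sits as a $1$-complemented subspace of $E$ onto which $E$ surjects.

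\emph{Homomorphisms and candidate ideals.} These facts make both cross‑terms in any matrix product negligible: $S_{1,2}T_{2,1}\in\overline{\mcg_D}(E)$ and $S_{2,1}T_{1,2}\in\mck_{\aleph_1}(D_\Gamma)$ for all operators involved. Hence $\phi(T)=T_{1,1}+\overline{\mcg_D}(E)$ and $\psi_\kappa(T)=T_{2,2}+\mck_\kappa(D_\Gamma)$ (for $\kappa\ge\aleph_1$) are algebra homomorphisms of $\mcb(X)$ onto $\mcb(E)/\overline{\mcg_D}(E)$ and $\mcb(D_\Gamma)/\mck_\kappa(D_\Gamma)$. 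I then identify $\mck_\kappa(X)=\ker\psi_\kappa$, $\mcj_\kappa(X)=\ker\phi\cap\ker\psi_\kappa$, and $\overline{\mcg_D}(X)=\ker\phi\cap\ker\psi_{\aleph_1}=\{T:T_{1,1}\in\overline{\mcg_D}(E),\,T_{2,2}\in\mck_{\aleph_1}(D_\Gamma)\}$; the last identity uses the factorisation lemma to absorb the off‑diagonal entries into $\mcg_D(X)$. Checking the strict inclusions among the named ideals is then routine.

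\emph{Classification via corner pairs.} For the converse I attach to a closed ideal $\mci$ its two diagonal trace ideals $\mci_{1,1}\subseteq\mcb(E)$ and $\mci_{2,2}\subseteq\mcb(D_\Gamma)$ (closed ideals, drawn from the two known lists). A nonzero $\mci$ contains $\mck(X)$, so $\mci_{1,1}\supseteq\mck(E)$ and $\mci_{2,2}\supseteq\mck(D_\Gamma)$. The factorisation lemma and the complemented copy of $D$ give a two‑way coupling: if $\mci_{2,2}\supseteq\mck_{\aleph_1}(D_\Gamma)$, sandwiching a separable block projection through the cross‑maps forces $\mci_{1,1}\supseteq\overline{\mcg_D}(E)$; conversely, writing the projection of $E$ onto its copy of $D$ as $BA$ with $AB$ the identity on $D$ and routing it through $D_\Gamma$ shows that $\mci_{1,1}\supseteq\overline{\mcg_D}(E)$ forces the block projection into $\mci_{2,2}$, i.e.\ $\mci_{2,2}\supseteq\mck_{\aleph_1}(D_\Gamma)$. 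Thus the only admissible corner pairs are exactly those realised by the listed ideals, which also rules out any ideal strictly between consecutive members and between $\mck(X)$ and $\overline{\mcg_D}(X)$.

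\emph{Determination and the main obstacle.} It remains to show the pair $(\mci_{1,1},\mci_{2,2})$ determines $\mci$. When $\mci_{2,2}\supseteq\mck_{\aleph_1}(D_\Gamma)$ the coupling gives $\mci\supseteq\overline{\mcg_D}(X)$, and I pass to the quotient: $(\phi,\psi_{\aleph_1})$ identifies $\mcb(X)/\overline{\mcg_D}(X)$ with the Banach‑algebra product $[\mcb(E)/\overline{\mcg_D}(E)]\oplus[\mcb(D_\Gamma)/\mck_{\aleph_1}(D_\Gamma)]$, whose first factor is simple (by \cite{LLR,LSZ}) and whose second factor has the Daws tail chain as its ideal lattice. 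Since closed ideals of a unital product split as products, the ideals above $\overline{\mcg_D}(X)$ are precisely the $\mcj_\kappa(X)$ and $\mck_\kappa(X)$, arranged in the ladder of the diagram. The residual case is $\mci_{2,2}=\mck(D_\Gamma)$ (whence $\mci_{1,1}=\mck(E)$), where I must exclude $\mci\supsetneq\mck(X)$: any such $\mci$ would contain a non‑compact operator, necessarily with a non‑compact off‑diagonal entry, and using that $E$ surjects onto $D$ while $D$ embeds into both $E$ and $D_\Gamma$ I pre‑ or post‑compose to manufacture a non‑compact diagonal corner, contradicting $\mci_{1,1}=\mck(E)$. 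I expect this control of the off‑diagonal entries — showing that the ``small'' separable corners hide no extra ideals, and in particular that a non‑compact cross‑operator always generates a non‑compact diagonal operator — to be the main obstacle; the factorisation lemma of the first paragraph is the tool that makes it run uniformly for $c_0$ and $\ell_1$.
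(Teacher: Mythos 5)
Your proposal is correct, and it reaches the theorem by a genuinely different mechanism than the paper, even though the skeleton is the same: quadrant decomposition of $\mcb(X)$, fullness of the off-diagonal quadrants, and reduction to the known lattices of $\mcb(E)$ and $\mcb(D_\Gamma)$. Your factorisation lemma is the paper's \cref{Cor:FactorThroughD} (your adjoint argument for operators out of $c_0(\Gamma)$, exploiting separability of $E^*$, is a legitimate substitute for the $\ell_\infty$-embedding argument of \cref{Lemma:FactorThroughD}). The real divergence is the key input: the paper proves in \cref{Lemma:4cases} that $I_D$ factors through \emph{every} non-compact operator in each of the four quadrants, which needs more than the classification statements --- it invokes \cite[Corollary~3.8 and Example~3.9]{LLR} and a careful examination of the proofs in \cite{MD} --- and this yields the dichotomy of \cref{Prop:idealdichotomy} for arbitrary (not necessarily closed) ideals, after which the order isomorphism of \cref{orderisomorphism} finishes. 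You never need that factorisation: you use \cite{LLR,LSZ,MD} purely as black boxes, recover the corner coupling by sandwiching the block projection $P_\Delta$ and the projection of $E$ onto its copy of $D$ (\cref{lemmaFactorIdofc0thruE}) through the cross maps, dispose of the residual case by converting a non-compact off-diagonal entry into a non-compact diagonal corner, and obtain the top of the lattice from the splitting of closed ideals in the unital product $\mcb(X)/\overline{\mcg_D}(X)\cong\bigl[\mcb(E)/\overline{\mcg_D}(E)\bigr]\oplus\bigl[\mcb(D_\Gamma)/\mck_{\aleph_1}(D_\Gamma)\bigr]$ rather than from an explicit order isomorphism. Your route is therefore more elementary and more self-contained relative to the cited classifications; the paper's route buys a stronger factorisation statement of independent interest and a dichotomy valid beyond closed ideals. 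Two details to make explicit in a write-up: (i) the assertion that every non-zero closed ideal contains $\mck(X)$ uses the approximation property of $X$; (ii) in the residual case with $D=\ell_1$, a non-compact $T_{1,2}\in\mcb(\ell_1(\Gamma);E)$ need not vanish off a countable block (unlike the $c_0$ case), so your lifting factorisation alone does not produce the required diagonal corner --- instead transfer the non-compactness to a countable block via a witnessing sequence (whose terms have countable joint support) and compose with the surjection $E\to D$ and a block embedding $D\to\ell_1(\Gamma)$, or alternatively post-compose with an isomorphic embedding $E\to\ell_1(\Gamma)$ as provided by \cref{Lemma:embedEinD}. Both fixes are routine, so this is a gloss rather than a gap.
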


\begin{remark}\label{remark:idealclass} In addition to the ``classical'' Banach spaces listed above, there are a number of ``exotic'', or purpose-built, Banach spaces whose closed ideals of operators can be classified. They occur in two classes:
  \begin{itemize}
  \item The first class contains the famous
        Banach space of Argyros and Haydon~\cite{ah} solving the scalar-plus-compact problem, as well as several variants and descendants of it obtained in \cite{tarb,mpz,KLindiana}. 
      \item The other class consists of the Banach space~$C(K)$ of continuous, scalar-valued functions defined on Koszmider's Mr\'{o}wka space~$K$, as shown in~\cite[Theorem~5.5]{KK}. Koszmider's original    construction
                of~$K$ in~\cite{kosz}
assumed the Continuum Hypothesis. A~con\-struc\-tion within ZFC is given in~\cite{PKNJL}.
  \end{itemize}  

  A possible explanation for the scarcity of Banach spaces~$X$   whose closed ideals of opera\-tors have
  been classified, especially among classical spaces, is that recent research has shown that in many
  cases~$\mcb(X)$ has~$2^{\mathfrak{c}}$ closed ideals,
  where~$\mathfrak{c}$ denotes the cardinality of the contiuum. Note that this is the largest possible number of closed ideals for  separable~$X$. Spaces for which~$\mcb(X)$ has~$2^{\mathfrak{c}}$ closed ideals  include $X=L_p[0,1]$ for $p\in(1,\infty)\setminus\{2\}$ (see~\cite{JS}),  $X= \ell_p\oplus\ell_q$  for $1\le p<q\le\infty$ with $(p,q)\ne(1,\infty)$ and $X = \ell_p\oplus c_0$ for $1< p<\infty$ (see \cite{FSZ1,FSZ2}).

For several other spaces~$X$, it is known that~$\mcb(X)$  contains at least continuum many closed ideals. This includes  $X=L_1[0,1]$, $X=C[0,1]$  and~$X=L_\infty[0,1]$ (see~\cite{JPS}; note that these results also cover
$X=\ell_\infty$ because~$\ell_\infty$ and~$L_\infty[0,1]$ are isomorphic
as Banach spaces by~\cite{Pel}), as well as the Tsirelson space and the Schreier space of order $n\in\N$ (see~\cite{BKL}). For $X=\ell_1\oplus c_0$, the best known result is that~$\mcb(X)$  has at least uncountably many closed ideals (see~\cite{SW}).
\end{remark}

\section{Preliminaries}

\noindent Below we explain this paper's most important notation, which is mostly standard. All of our results are valid for both real and complex Banach spaces; we write $\mathbb{K}$ for the scalar field. By an \textit{operator} we always mean a bounded and linear map between normed spaces. 

\subsection*{Operator ideals} Following Pietsch~\cite{pie}, 
an \textit{operator ideal} is an assignment $\mci$ which de\-signates to each pair~$(X,Y)$ of Banach spaces a subspace~$\mci(X;Y)$ of the Banach space~$\mcb(X;Y)$ of operators from~$X$ to~$Y$ for which:
\begin{enumerate}[label={\normalfont{(\roman*)}}]
    \item there exists a pair $(X,Y)$ of Banach spaces for which $\mci(X;Y) \neq \{0\}$;
    \item for any quadruple $(W,X,Y,Z)$ of Banach spaces and any operators $S \in \mcb(W;X)$, $T \in \mci(X;Y)$, $U \in \mcb(Y;Z)$, we have that $UTS \in \mci(W;Z)$.
\end{enumerate}
As usual, we write $\mci(X)$ to abbreviate $\mci(X;X)$. For any operator ideal $\mci$, the map~$\overline{\mci}$ sending a pair of Banach spaces $(X,Y)$ to the norm closure of $\mci(X;Y)$ in $\mcb(X;Y)$ is also an operator ideal. If $\mci=\overline{\mci}$, then we call $\mci$ a \textit{closed operator ideal}. 

We shall consider the following three operator ideals:
\begin{itemize}
\item[$\mck$,] the ideal of \textit{compact operators.}
\item[$\mck_\kappa$,] the ideal of $\kappa$\textit{-compact operators,}\label{Defn:kappacompact}
  defined for any infinite cardinal~$\kappa$.

  \noindent The precise definition is as follows. An operator $T\in\mcb(X;Y)$ is $\kappa$\textit{-compact} if, for each $\epsilon > 0$, the
  closed unit ball~$B_X$ of~$X$ contains a subset $X_\epsilon$ with $|X_\epsilon|<\kappa$ such that \[\inf\{\|T(x-y)\|:y \in X_\epsilon \} \leq \epsilon\] for every~$x\in B_X$.
  Writing $\mck_\kappa(X;Y)$ for the set of $\kappa$-compact operators from~$X$ to~$Y$, we obtain a closed operator ideal~$\mck_\kappa$.
  
\noindent Notice that $\mck_{\aleph_0}(X;Y)=\mck(X;Y)$, so the notion of $\kappa$-compactness is indeed a generalisation of compactness.
\item[$\mcg_D$,] the ideal of operators factoring through a certain Banach space~$D$. 
  
\noindent  Here, we say that an operator $T \in \mcb(X;Y)$ \textit{factors through} $D$ if there are operators $U\colon X\to D$ and $V\colon D\to Y$ such that $T=VU$, and we write $\mcg_D(X;Y)$ for the set of operators factoring through~$D$.  This defines 
an operator ideal $\mcg_D$ provided that~$D$ contains a complemented subspace isomorphic to~$D\oplus D$, which is true in the cases that we shall consider, namely $D=c_0$ 
 and  $D=\ell_p$ for some $p\in[1,\infty)$.
\end{itemize}

\subsection*{Operators on the direct sum of a pair of Banach spaces}
Let $X_1$ and $X_2$ be Banach spaces, and endow their direct sum~$X_1 \oplus X_2$ with any norm satisfying
\[ \max\{\lVert x_1\rVert,\lVert x_2\rVert\}\le\lVert (x_1,x_2)\rVert\le \lVert x_1\rVert+\lVert x_2\rVert\qquad (x_1\in X_1,\, x_2\in X_2)\;. \]
For $i \in \{1,2\}$, we write $Q_i\colon X_1 \oplus X_2 \to X_i$ and $J_i\colon X_i \to X_1 \oplus X_2$ for the $i^{\text{th}}$ coordinate projection and embedding, respectively. For  $T \in\mcb(X_1 \oplus X_2)$ and $i,j \in \{1,2\}$, set $T_{i,j}= Q_iTJ_j\in\mcb(X_j;X_i)$. Then we have 
\begin{equation}\label{Eq:MatrixT}
  T=\sum_{i,j=1}^2 J_i T_{i,j} Q_j\;. \end{equation}
It follows that, for any operator ideal~$\mci$,
\begin{equation}\label{Eq:Prop2.2} T\in\mci(X_1\oplus X_2)\quad \iff\quad T_{i,j}\in\mci(X_j;X_i)\ \text{for}\ i,j\in\{1,2\}\;. \end{equation}
 We shall identify the operator $T\in\mcb(X_1 \oplus X_2)$ with the matrix
 \[  T= \begin{pmatrix} T_{1,1} & T_{1,2}\\
T_{2,1} & T_{2,2}
\end{pmatrix}\;. \]
Then the action of $T$ on a pair $(x_1,x_2) \in X_1 \oplus X_2$ is given by 
\[T(x_1,x_2)= \begin{pmatrix} T_{1,1} & T_{1,2}\\
T_{2,1} & T_{2,2}
\end{pmatrix}\begin{pmatrix} x_1\\ x_2\end{pmatrix} = \begin{pmatrix} T_{1,1}x_1+ T_{1,2}x_2\\
    T_{2,1}x_1 + T_{2,2}x_2 \end{pmatrix}\;.\] Composition of operators is given by matrix multiplication, addition is entry\-wise, and the operator norm  satisfies
  \begin{equation}\label{Eq:OpNormSum}
    \max_{i,j\in\{1,2\}}\lVert T_{i,j}\rVert\le \lVert T\rVert\le \sum_{i,j=1}^2\lVert T_{i,j}\rVert\;.
  \end{equation}  

For a subset $\mci$ of $\mcb(X_1 \oplus X_2)$ and $i,j \in \{1,2\}$,  we define the $(i,j)^{\text{th}}$ \textit{quadrant} of~$\mci$ by
\begin{equation}\label{Eq:quadrants} \mci_{i,j} = \{ Q_iTJ_j : T\in\mci\}\subseteq\mcb(X_j;X_i)\;.
\end{equation}
On the other hand, given subsets $\mci_{i,j}$ of $\mcb(X_j;X_i)$ for  $i,j \in \{1,2\}$, we define
\begin{equation}\label{Eq:matrixofsets} \begin{pmatrix} \mci_{1,1} & \mci_{1,2}\\ \mci_{2,1} & \mci_{2,2}  \end{pmatrix} = \left\{ \begin{pmatrix} T_{1,1} & T_{1,2}\\
T_{2,1} & T_{2,2}
\end{pmatrix}: T_{i,j} \in \mci_{i,j}\ (i,j \in \{1,2\})\right\}\subseteq\mcb(X_1 \oplus X_2)\;.   \end{equation}
The first part of the following lemma can be seen as a generalisation of~\eqref{Eq:Prop2.2}  to the case where the ideal~$\mci$ does not come from an operator ideal. It
says that if we decompose~$\mci$ into its quadrants according to~\eqref{Eq:quadrants} and then reassemble the quadrants according to~\eqref{Eq:matrixofsets}, we obtain~$\mci$.

\begin{lemma}\label{LemmaDiagonalIdeal} Let $\mci$ be an ideal of $\mcb(X_1\oplus X_2)$ for some  Banach spaces~$X_1$ and~$X_2$.
  Then
\begin{equation}\label{LemmaDiagonalIdealEq1}  \mci = \begin{pmatrix} \mci_{1,1} & \mci_{1,2}\\ \mci_{2,1} & \mci_{2,2} \end{pmatrix}, \end{equation}
and  $\mci_{i,i}$ is an ideal of  $\mcb(X_i)$ for $i\in\{1,2\}$. Moreover,   $\mci_{i,j}$ is closed in $\mcb(X_j,X_i)$ for each $i,j\in\{1,2\}$ if and only if~$\mci$ is closed in $\mcb(X_1\oplus X_2)$.  
\end{lemma}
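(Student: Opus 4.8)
The plan is to prove the three assertions in turn, relying throughout on two elementary identities for the coordinate maps: $Q_iJ_i=\mathrm{id}_{X_i}$ for $i\in\{1,2\}$, and the fact that each $P_i:=J_iQ_i$ is a bounded operator on $X_1\oplus X_2$. Combined with the ideal axiom (namely that $ATB\in\mci$ whenever $T\in\mci$ and $A,B\in\mcb(X_1\oplus X_2)$) and the fact that $\mci$ is a linear subspace, these are all the tools required.

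For the matrix identity~\eqref{LemmaDiagonalIdealEq1}, the inclusion $\subseteq$ is immediate: given $T\in\mci$, each entry $T_{i,j}=Q_iTJ_j$ lies in $\mci_{i,j}$ by the definition of the quadrants in~\eqref{Eq:quadrants}, so the decomposition~\eqref{Eq:MatrixT} exhibits $T$ as a member of the right-hand side. The reverse inclusion is the crux of the lemma. Here I would take an arbitrary matrix whose $(i,j)^{\text{th}}$ entry $S_{i,j}$ lies in $\mci_{i,j}$ and, for each pair $(i,j)$, choose $T^{(i,j)}\in\mci$ with $S_{i,j}=Q_iT^{(i,j)}J_j$; note that the four choices are in general distinct, which is exactly why a single $T\in\mci$ is not immediately available. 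The key step is then to localise each $T^{(i,j)}$ to its own quadrant via $J_iS_{i,j}Q_j=P_iT^{(i,j)}P_j$, which lies in $\mci$ by the ideal axiom, and to sum over $i,j$ using~\eqref{Eq:MatrixT} together with the fact that $\mci$ is a subspace. This recovers the matrix as an element of $\mci$. It is precisely at this point that one uses that $\mci$ is an ideal, and not merely a subset of $\mcb(X_1\oplus X_2)$ closed under the quadrant operation; this is the main step requiring more than a direct computation.

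That each diagonal quadrant $\mci_{i,i}$ is an ideal of $\mcb(X_i)$ follows by the same localisation idea. Given $A=Q_iTJ_i\in\mci_{i,i}$ with $T\in\mci$, and arbitrary $B,C\in\mcb(X_i)$, I would set $\widetilde B=J_iBQ_i$ and $\widetilde C=J_iCQ_i$ in $\mcb(X_1\oplus X_2)$, observe that $\widetilde BT\widetilde C\in\mci$ by the ideal axiom, and compute $Q_i(\widetilde BT\widetilde C)J_i=BAC$ using $Q_iJ_i=\mathrm{id}_{X_i}$; hence $BAC\in\mci_{i,i}$. Linearity of $\mci_{i,i}$ is clear, since $A\mapsto Q_iAJ_i$ is linear and $\mci$ is a subspace.

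Finally, for the closedness equivalence I would pass norm limits through the bounded maps $S\mapsto J_iSQ_j$ and $A\mapsto Q_iAJ_j$. If $\mci$ is closed and $A_n\to A$ with $A_n\in\mci_{i,j}$, then $J_iA_nQ_j\in\mci$ converges to $J_iAQ_j$, which therefore lies in $\mci$, whence $A=Q_i(J_iAQ_j)J_j\in\mci_{i,j}$ again via $Q_iJ_i=\mathrm{id}_{X_i}$ and $Q_jJ_j=\mathrm{id}_{X_j}$. Conversely, if every quadrant is closed and $S_n\to S$ with $S_n\in\mci$, then each entry $(S_n)_{i,j}=Q_iS_nJ_j\in\mci_{i,j}$ converges to $S_{i,j}$ by the left-hand inequality in~\eqref{Eq:OpNormSum}, so $S_{i,j}\in\mci_{i,j}$ for all $i,j$, and the already-established identity~\eqref{LemmaDiagonalIdealEq1} places $S$ in $\mci$.
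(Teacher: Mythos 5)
Your proposal is correct and follows essentially the same route as the paper: both prove the reverse inclusion in~\eqref{LemmaDiagonalIdealEq1} by conjugating the representatives $T^{(i,j)}\in\mci$ with the operators $J_kQ_k$ and summing via~\eqref{Eq:MatrixT}, both verify the ideal property of $\mci_{i,i}$ by transporting operators on $X_i$ into $\mcb(X_1\oplus X_2)$ via $T\mapsto J_iTQ_i$, and both deduce the closedness equivalence from~\eqref{Eq:OpNormSum} and~\eqref{LemmaDiagonalIdealEq1} (you merely spell out the limit argument that the paper leaves as ``follows easily'', and handle left and right multiplication in one step rather than two).
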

\begin{proof} The inclusion $\subseteq$ in~\eqref{LemmaDiagonalIdealEq1} holds true by the  definitions.

  Conversely, suppose that $T=(T_{i,j})_{i,j=1}^2$ with $T_{i,j}\in\mci_{i,j}$ for each $i,j\in\{1,2\}$, say $T_{i,j}=Q_iS^{i,j}J_j$, where $S^{i,j}\in\mci$. Then, by~\eqref{Eq:MatrixT}, we have
  \[ T  =  \sum_{i,j=1}^2 J_iT_{i,j}Q_j = \sum_{i,j=1}^2 (J_iQ_i)S^{i,j}(J_jQ_j)\in\mci  \]
    because $\mci$ is an ideal of~$\mcb(X_1\oplus X_2)$ and $J_kQ_k\in\mcb(X_1\oplus X_2)$ for $k\in\{1,2\}$.   

 Next, we verify that $\mci_{i,i}$ is an ideal of  $\mcb(X_i)$ for $i\in\{1,2\}$. It is clear that $\mci_{i,i}$ is a subspace.
 Suppose that $S\in\mci_{i,i}$ and 
  $T\in\mcb(X_i)$, say  $S = U_{i,i}$, where $U\in\mci$. Then $UJ_iTQ_i\in\mci$ because $\mci$ is an ideal of~$\mcb(X_1\oplus X_2)$ and
 $J_iTQ_i\in\mcb(X_1\oplus X_2)$, and hence 
 \[ \mci_{i,i}\ni  (UJ_iTQ_i)_{i,i}  = Q_iUJ_iTQ_iJ_i =  ST\;. \] 
 The proof that $TS\in  \mci_{i,i}$ is similar.

The final clause follows easily from~\eqref{Eq:OpNormSum} and~\eqref{LemmaDiagonalIdealEq1}.
\end{proof}   

\subsection*{Long sequence spaces}
For a non-empty set~$\Gamma$ and $p\in[1,\infty)$, we consider the Banach spaces
  \begin{align*}
    c_0(\Gamma) &= \left\{x\in \mathbb{K}^\Gamma : \{\gamma \in \Gamma : \lvert x(\gamma)\rvert>\epsilon \}\ \text{is finite for every}\ \epsilon >0\right\}\\
    \intertext{and}
    \ell_p(\Gamma) &= \Bigl\{ x\in\mathbb{K}^\Gamma : \sum_{\gamma\in\Gamma}\lvert x(\gamma)\rvert^p<\infty\Bigr\}\;.
  \end{align*}
 The norm of $c_0(\Gamma)$ is the supremum norm, and the norm of $\ell_p(\Gamma)$ is $\|x\|=\bigl(\sum_{\gamma\in\Gamma}| x(\gamma)|^p\bigr)^{\frac{1}{p}}$.
As usual, we write~$c_0$ and~$\ell_p$ instead of~$c_0(\N)$ and~$\ell_p(\N)$, respectively.  
For notational convenience, we use the convention that~$D_\Gamma$ will denote either~$c_0(\Gamma)$ or~$\ell_p(\Gamma)$ for some $p\in[1,\infty)$, unless otherwise specified. Only the cardinality of the index set~$\Gamma$ matters, in the sense that $D_\Gamma$ is isometrically isomorphic to $D_{\lvert\Gamma\rvert}$, and $D_\Gamma$ is not isomorphic to~$D_\Delta$ for any index set~$\Delta$ of cardinality other than~$\lvert\Gamma\rvert$. 
  
  The \textit{support} of an element $x\in D_\Gamma$ is $\supp x = \{\gamma \in \Gamma : x(\gamma)\neq 0\}$,  which is always a countable set. 

  For $\gamma\in\Gamma$, $e_\gamma\in D_\Gamma$ denotes the element given by $e_\gamma(\beta)=1$ if $\beta=\gamma$ and  $e_\gamma(\beta)=0$ otherwise.  The (transfinite) sequence $(e_\gamma)_{\gamma\in\Gamma}$ is the \textit{(long) unit vector basis} for~$D_\Gamma$.
   The only facts about it that we shall use are that $(e_\gamma)_{\gamma\in\Gamma}$ spans a dense  subspace of~$D_\Gamma$, and that the existence of such a basis implies that~$D_\Gamma$ has the approximation property.
  
For a subset~$\Delta$ of~$\Gamma$, $P_{\Delta}\in\mcb(D_\Gamma)$ denotes the \textit{basis projection} given by
$(P_{\Delta} x)(\gamma) = x(\gamma)$ if $\gamma\in\Delta$ and $(P_{\Delta}x)(\gamma)= 0$ otherwise, for every $x\in D_\Gamma$. 

\section{The proof of \cref{mainthm}}
\noindent To aid the presentation,  we split
the proof of \cref{mainthm} into a series of lemmas, some of which may essentially be known. However, to keep the the presentation as self-contained as possible, we include full proofs, except for references to the ideal classifications~\cite{MD,LLR} that we will ultimately need anyway. The  proof of \cref{mainthm} itself
requires results about the transfinite sequence spaces~$c_0(\Gamma)$ and~$\ell_1(\Gamma)$ only, not $\ell_p(\Gamma)$ for $1<p<\infty$. However, our first few results hold true also for the latter spaces and with identical proofs, so we give these more general results.

\begin{lemma}\label{Lemma:FactorThroughD} Let $D_\Gamma=c_0(\Gamma)$ or $D_\Gamma=\ell_p(\Gamma)$ for some $p\in[1,\infty)$ and some set~$\Gamma\ne\emptyset$. 
\begin{enumerate}[label={\normalfont{(\roman*)}}]
\item\label{Lemma:FactorThroughD1} Every separable subspace of~$D_\Gamma$ is contained in the image of the basis projection~$P_\Delta$ for some  countable 
  subset~$\Delta$ of~$\Gamma$.
\item\label{Lemma:FactorThroughD2} Suppose that $D_\Gamma\ne\ell_1(\Gamma)$. Then, for every Banach space~$E$  and every operator $T\colon D_\Gamma\to E$  for which there exists an injective operator from the image of~$T$ into~$\ell_\infty,$ there is a countable 
  subset~$\Delta$ of~$\Gamma$ such that $T =  TP_\Delta$.
\end{enumerate}
\end{lemma}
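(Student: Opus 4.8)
The plan is to handle the two parts separately, in each case relying on the fact recorded in the preliminaries that every element of $D_\Gamma$ has countable support.

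For part~(i), let $S$ be a separable subspace of $D_\Gamma$ and fix a countable dense subset $\{x_n : n\in\N\}$ of~$S$. Each support $\supp x_n$ is countable, so $\Delta := \bigcup_{n\in\N}\supp x_n$ is a countable subset of~$\Gamma$. By construction $P_\Delta x_n = x_n$ for every~$n$, so each $x_n$ lies in the image of~$P_\Delta$. As $P_\Delta$ is a bounded projection, its image is closed; since this image contains the dense subset $\{x_n : n\in\N\}$ of~$S$, it contains all of~$S$, giving $S\subseteq P_\Delta(D_\Gamma)$.

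For part~(ii), write $R\colon\operatorname{im} T\to\ell_\infty$ for the given injective operator, so that $RT\colon D_\Gamma\to\ell_\infty$ is a bounded operator, and for each $n\in\N$ let $\phi_n\colon D_\Gamma\to\mathbb{K}$ be the bounded linear functional sending~$x$ to the $n^{\text{th}}$ coordinate of~$RTx$. I would set $\Delta := \{\gamma\in\Gamma : Te_\gamma\ne 0\}$. Since $R$ is injective, $Te_\gamma\ne 0$ precisely when $RTe_\gamma\ne 0$, which happens precisely when $\phi_n(e_\gamma)\ne 0$ for some~$n$; hence $\Delta = \bigcup_{n\in\N}\{\gamma\in\Gamma : \phi_n(e_\gamma)\ne 0\}$. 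The crux is to show that each set $\{\gamma : \phi_n(e_\gamma)\ne 0\}$ is countable, and this is exactly where the hypothesis $D_\Gamma\ne\ell_1(\Gamma)$ is used: the dual of~$D_\Gamma$ is then $\ell_1(\Gamma)$ (when $D_\Gamma=c_0(\Gamma)$) or $\ell_q(\Gamma)$ with $q<\infty$ (when $D_\Gamma=\ell_p(\Gamma)$ with $p>1$), so $\phi_n$ is represented by some $a^{(n)}$ in such a space via $\phi_n(e_\gamma)=a^{(n)}(\gamma)$. Every element of $\ell_1(\Gamma)$ or $\ell_q(\Gamma)$ has countable support, so $\{\gamma : \phi_n(e_\gamma)\ne 0\}=\supp a^{(n)}$ is countable, and therefore so is~$\Delta$.

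Finally, with $\Delta$ countable, I would check $T=TP_\Delta$ on the long unit vector basis: for $\gamma\in\Delta$ we have $P_\Delta e_\gamma=e_\gamma$, whereas for $\gamma\notin\Delta$ we have both $Te_\gamma=0$ and $P_\Delta e_\gamma=0$, so in either case $Te_\gamma=TP_\Delta e_\gamma$. As $(e_\gamma)_{\gamma\in\Gamma}$ spans a dense subspace of~$D_\Gamma$ and the operators $T$ and $TP_\Delta$ are continuous, this yields $T=TP_\Delta$. The only real obstacle is the countability of the sets $\{\gamma : \phi_n(e_\gamma)\ne 0\}$; the exclusion of $\ell_1(\Gamma)$ is essential, since its dual $\ell_\infty(\Gamma)$ contains elements of uncountable support, and the injection into~$\ell_\infty$ is precisely what supplies the countable family $(\phi_n)_{n\in\N}$ of coordinate functionals whose supports we can control.
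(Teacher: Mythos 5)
Your proof is correct. Part~(i) is essentially identical to the paper's argument: a countable dense subset, the union of its supports, and a continuity/closedness argument. In part~(ii) your overall skeleton also matches the paper's --- compose $T$ with the given injection $R$ into $\ell_\infty$, observe that the set $\Delta=\{\gamma\in\Gamma: Te_\gamma\ne 0\}$ decomposes along the countably many coordinates of $\ell_\infty$, prove that it is countable, and finish by density of the span of the long unit vector basis --- but you establish the crucial countability step by a genuinely different means. The paper proves it from scratch: it shows that each level set $\Delta_{k,m}=\bigl\{\gamma\in\Gamma: \lvert (RTe_\gamma)(m)\rvert\ge 1/k\bigr\}$ is \emph{finite} by contradiction, feeding the vector $x=\sum_{n}\frac{\sigma_n}{n}e_{\gamma_n}$ (which lies in $D_\Gamma$ precisely because $D_\Gamma\ne\ell_1(\Gamma)$) into $RT$ and obtaining an infinite $m^{\text{th}}$ coordinate. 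You instead invoke the non-separable duality identifications $c_0(\Gamma)^*=\ell_1(\Gamma)$ and $\ell_p(\Gamma)^*=\ell_q(\Gamma)$ with $q<\infty$ for $p>1$, together with the countable-support property of elements of these long sequence spaces, applied to the coordinate functionals $\phi_n$. Both routes are valid: yours is shorter and conceptually clean, but it uses the duality theorems for uncountable index sets as a black box, whereas the paper's argument is elementary and self-contained (in keeping with its stated aim of relying only on the cited ideal classifications) and yields slightly more information, namely finiteness rather than mere countability of each level set. Your closing remark on why $\ell_1(\Gamma)$ must be excluded --- its dual $\ell_\infty(\Gamma)$ contains elements of uncountable support --- is the functional-analytic counterpart of the paper's remark, which exhibits the summation operator on $\ell_1(\Gamma)$ as a concrete counterexample.
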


\begin{proof} \ref{Lemma:FactorThroughD1}. 
Every separable subspace~$E$ of~$D_\Gamma$ has the form
$E=\overline{W}$ for some countable subset~$W$ of~$D_\Gamma$. Define
$\Delta = \bigcup_{w\in W} \operatorname{supp} w$, which 
 is a countable union of countable sets and is thus countable. The continuity
of the projection~$P_\Delta$ implies that $x=P_\Delta x$ for every
$x\in E$. Hence the image of~$P_{\Delta}$ contains~$E$.

\ref{Lemma:FactorThroughD2}. 
Let $U\colon T(D_\Gamma)\to \ell_\infty$ be  an injective operator. Assume towards a contradiction that the set \[ \Delta_{k,m} = \Bigl\{\gamma \in \Gamma: \lvert UTe_\gamma(m)\rvert\ge\frac{1}{k} \Bigr\} \]
is infinite for some $k,m\in\N$, so that it contains an infinite sequence  $(\gamma_n)_{n\in\N}$ of distinct elements. For each $n\in\N$,  take a scalar~$\sigma_n$ of modulus one 
such that \mbox{$\sigma_n\cdot (U T e_{\gamma_n})(m)\ge 1/k$}. Since $D_\Gamma\ne\ell_1(\Gamma)$, we have $x= \sum_{n \in \mathbb{N}}\frac{\sigma_n}{n}e_{\gamma_n}\in D_\Gamma$, but
\[(UTx)(m) = \sum_{n \in \mathbb{N}} \frac{\sigma_n}{n}(UTe_{\gamma_n})(m)\ge \frac{1}{k}\sum_{n\in\mathbb{N}} \frac{1}{n} = \infty\;,\] a contradiction. 
Hence $\Delta_{k,m}$ is finite for each $k,m\in\N$, so the union $\Delta = \bigcup_{k,m \in \mathbb{N}}\Delta_{k,m}$
is countable. For each $\gamma\in\Gamma\setminus\Delta$, we have $UTe_\gamma=0$, so $Te_\gamma=0$ by the injectivity of~$U$, and therefore $TP_\Delta = T$.
\end{proof}

\begin{remark}
\begin{enumerate}[label={\normalfont{(\roman*)}}]
\item  The case $D_\Gamma=\ell_1(\Gamma)$ must be excluded in \cref{Lemma:FactorThroughD}\ref{Lemma:FactorThroughD2} because, for every non-zero Banach space~$E$, there is an operator $T\colon\ell_1(\Gamma)\to E$ which has one-dimensional image and satisfies $Te_\gamma\ne 0$ for every $\gamma\in\Gamma$, namely
  the summation operator~$T$ given by $Tx = \sum_{\gamma\in\Gamma} x(\gamma)\, y$ for every $x\in\ell_1(\Gamma)$, where $y$ is any fixed non-zero element of~$E$.
\item We refer to~\cite{JK} for a detailed discussion of the condition in \Cref{Lemma:FactorThroughD}\ref{Lemma:FactorThroughD2} that  the image of~$T$ admits an injective operator into~$\ell_\infty$.
\end{enumerate}  
\end{remark}  

\begin{corollary}\label{Cor:FactorThroughD}
  Let $(D,D_\Gamma) = (c_0,c_0(\Gamma))$ or $(D,D_\Gamma) = (\ell_p,\ell_p(\Gamma))$ for some $p\in[1,\infty)$ and some uncountable set~$\Gamma$, and let~$E$ be any separable Banach space. 
    Then  \[ \mcb(D_\Gamma;E) = \mcg_D(D_\Gamma;E)\qquad\text{and}\qquad \mcb(E;D_\Gamma)=\mcg_{D}(E;D_\Gamma)\;. \]
\end{corollary}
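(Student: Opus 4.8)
The two inclusions $\mcg_D(D_\Gamma;E)\subseteq\mcb(D_\Gamma;E)$ and $\mcg_D(E;D_\Gamma)\subseteq\mcb(E;D_\Gamma)$ are immediate from the definition of $\mcg_D$, so the plan in each case is to prove the reverse inclusion, namely that every bounded operator factors through $D$. Throughout I would use that a countable subset $\Delta\subseteq\Gamma$ can always be enlarged to a countably infinite one (as $\Gamma$ is uncountable, hence infinite), that $D_\Delta$ is then isometrically isomorphic to $D$, and that $P_\Delta$ corestricts to a surjection $D_\Gamma\to D_\Delta$ whose composition with the inclusion $D_\Delta\hookrightarrow D_\Gamma$ recovers $P_\Delta$.

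For the equality $\mcb(E;D_\Gamma)=\mcg_D(E;D_\Gamma)$, which I expect to be routine and to hold uniformly in $D$, I would start from $T\in\mcb(E;D_\Gamma)$ and note that its image $T(E)$ is separable because $E$ is. Then \cref{Lemma:FactorThroughD}\ref{Lemma:FactorThroughD1} supplies a countable $\Delta\subseteq\Gamma$ with $T(E)\subseteq P_\Delta(D_\Gamma)=D_\Delta$, so corestricting $T$ to $D_\Delta$ and then including $D_\Delta\hookrightarrow D_\Gamma$ exhibits $T$ as a composition through $D_\Delta\cong D$; hence $T\in\mcg_D(E;D_\Gamma)$.

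For the equality $\mcb(D_\Gamma;E)=\mcg_D(D_\Gamma;E)$ I would split into two cases. When $D_\Gamma\ne\ell_1(\Gamma)$, so that $D=c_0$ or $D=\ell_p$ with $p>1$, take $T\in\mcb(D_\Gamma;E)$; its image is a subspace of the separable space $E$, hence separable, so its completion embeds isometrically into $\ell_\infty$, which furnishes an injective operator from the image of $T$ into $\ell_\infty$. Then \cref{Lemma:FactorThroughD}\ref{Lemma:FactorThroughD2} yields a countable $\Delta\subseteq\Gamma$ with $T=TP_\Delta$, and factoring $P_\Delta$ through $D_\Delta\cong D$ shows $T\in\mcg_D(D_\Gamma;E)$.

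The remaining case $D_\Gamma=\ell_1(\Gamma)$ is the step I expect to be the main obstacle, precisely because \cref{Lemma:FactorThroughD}\ref{Lemma:FactorThroughD2} fails here: the summation operator of the preceding remark shows that $T=TP_\Delta$ can never hold for a countable $\Delta$, so $T$ has no countable ``support'' and the previous argument breaks down. Instead I would exploit the projectivity of $\ell_1(\Gamma)$. Given $T\in\mcb(\ell_1(\Gamma);E)$, set $Y=\overline{T(\ell_1(\Gamma))}$, which is a separable Banach space, and choose a surjection $q\colon\ell_1\to Y$ (every separable Banach space is a quotient of $\ell_1$). By the open mapping theorem there is a constant $C$ such that each $Te_\gamma\in Y$ admits a preimage $u_\gamma\in\ell_1$ with $qu_\gamma=Te_\gamma$ and $\|u_\gamma\|\le C\|Te_\gamma\|\le C\|T\|$. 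Since $\sup_\gamma\|u_\gamma\|<\infty$, the assignment $e_\gamma\mapsto u_\gamma$ extends to a bounded operator $U\colon\ell_1(\Gamma)\to\ell_1$, and $qU$ agrees with $T$ on each $e_\gamma$ and hence on all of $\ell_1(\Gamma)$. Composing $q$ with the inclusion $Y\hookrightarrow E$ then writes $T$ as an operator factoring through $\ell_1=D$, so $T\in\mcg_D(\ell_1(\Gamma);E)$, which completes the proof.
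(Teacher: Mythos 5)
Your proposal is correct and follows essentially the same route as the paper: the non-$\ell_1$ cases are handled via \cref{Lemma:FactorThroughD} together with the facts that $P_\Delta(D_\Gamma)\cong D$ for countably infinite $\Delta$ and that separable spaces embed into $\ell_\infty$, and the case $D_\Gamma=\ell_1(\Gamma)$ is handled by the lifting property of $\ell_1$ exactly as in the paper (the only cosmetic difference being that you lift against a surjection $\ell_1\to\overline{T(\ell_1(\Gamma))}$ rather than $\ell_1\to E$).
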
  

\begin{proof}
The first identity for $D_\Gamma\ne\ell_1(\Gamma)$, and the second identity in full generality, both follow easily from \cref{Lemma:FactorThroughD}  because 
the image of the projection~$P_\Delta$ for~$\Delta$ countable is either finite-dimensional or isomorphic to~$D$, and~$E$, being separable, embeds isometrically into~$\ell_\infty$. 

It remains to show that every operator $T\colon\ell_1(\Gamma)\to E$ factors through~$\ell_1$. We use the lifting property  of~$\ell_1$ (see for instance \cite[Theorem 5.1]{BST})
to verify this. Indeed, since~$E$ is separable, we can take a surjective operator $S\colon\ell_1\to E$. By the open mapping theorem,   there is a constant $c>0$ such that, for  every $y\in E$, there is $x\in\ell_1$ with $Sx=y$ and $\lVert x\rVert\le c\lVert y\rVert$. Hence, for each $\gamma\in\Gamma$, we can find $x_\gamma\in\ell_1$ such that $Sx_\gamma = Te_\gamma$ and $\lVert x_\gamma\rVert\le c\lVert Te_\gamma\rVert\le c\lVert T\rVert$. It follows that we can define an operator $R\colon\ell_1(\Gamma)\to\ell_1$ by $Re_\gamma = x_\gamma$ for each $\gamma\in\Gamma$, and clearly $T = SR$.
\end{proof}

\begin{lemma}\label{lemmaFactorIdofc0thruE}
 Let $D=c_0$ or $D=\ell_p$ for some $p\in[1,\infty)$, and let $(E_n)_{n\in\N}$ be a sequence of non-zero Banach spaces.   Then there are operators $R\colon D\to \left(\bigoplus_{n \in \mathbb{N}} E_n\right)_D$ and $S\colon \left(\bigoplus_{n \in \mathbb{N}} E_n\right)_D\to D$ such that $SR = I_D$.
\end{lemma}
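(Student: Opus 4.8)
The plan is to construct $R$ and $S$ explicitly using the unit vector basis of $D$. The key observation is that each $E_n$ is non-zero, so we may choose a norm-one vector $y_n \in E_n$ for every $n \in \N$. First I would define $R \colon D \to \left(\bigoplus_{n \in \mathbb{N}} E_n\right)_D$ by sending the $n$th basis vector $e_n$ of $D$ to the element of $\left(\bigoplus_{n \in \mathbb{N}} E_n\right)_D$ whose $n$th coordinate is $y_n$ and whose other coordinates vanish; that is, $R$ acts coordinatewise by $(Rx)_n = x(n)\, y_n$ for $x \in D$. Because each $y_n$ has norm one, the $D$-norm of $Rx$ equals the $D$-norm of the scalar sequence $(x(n))_{n \in \N}$, so $R$ is a well-defined linear isometry into $\left(\bigoplus_{n \in \mathbb{N}} E_n\right)_D$.

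Next I would construct a left inverse $S$. The natural choice is to select, for each $n$, a norm-one functional $\phi_n \in E_n^*$ with $\phi_n(y_n) = 1$, which exists by the Hahn--Banach theorem. Define $S \colon \left(\bigoplus_{n \in \mathbb{N}} E_n\right)_D \to D$ by reading off these functionals coordinatewise, namely $(Sz)(n) = \phi_n(z_n)$ for $z = (z_n)_{n \in \N}$ in the direct sum. I would then check that $S$ is bounded: since each $\phi_n$ has norm one, the scalar sequence $(\phi_n(z_n))_{n \in \N}$ is dominated coordinatewise in modulus by $(\lVert z_n \rVert)_{n \in \N}$, and the latter sequence determines the $D$-norm of $z$, so $(Sz)(n) \to 0$ in the $c_0$ case and the sequence lies in $\ell_p$ in the $\ell_p$ case, with $\lVert Sz \rVert \le \lVert z \rVert$. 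Hence $S$ is a well-defined operator of norm at most one.

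Finally, composing gives $(SRx)(n) = \phi_n\bigl((Rx)_n\bigr) = \phi_n(x(n)\, y_n) = x(n)\,\phi_n(y_n) = x(n)$ for every $x \in D$ and every $n \in \N$, so $SR = I_D$, as required.

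I do not expect any serious obstacle here: the statement is a routine factorisation of the identity through a block sum, and the only mild care needed is to confirm that the coordinatewise formulas for $R$ and $S$ genuinely land in the stated spaces and are bounded, which follows immediately from the normalisation $\lVert y_n \rVert = \lVert \phi_n \rVert = \phi_n(y_n) = 1$ and the defining property of the $D$-sum norm. The one point worth stating carefully is that this argument is uniform across the two cases $D = c_0$ and $D = \ell_p$, since in both the norm of an element of the direct sum is exactly the $D$-norm of the sequence of its coordinate norms.
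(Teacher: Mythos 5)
Your proposal is correct and is essentially identical to the paper's proof: both choose norm-one vectors $y_n\in E_n$ and norming functionals $f_n\in E_n^*$ with $\langle y_n,f_n\rangle=1$, then define $R$ by coordinatewise multiplication against $(y_n)$ and $S$ by coordinatewise evaluation against $(f_n)$. The paper merely states the construction in one line and leaves the boundedness checks implicit, which you have spelled out accurately.
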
   
\begin{proof} For every $n\in\mathbb{N}$, 
  choose   $y_n\in E_n$ and $f_n\in E_n^*$ with \mbox{$\lVert y_n\rVert = \lVert f_n\rVert = \langle y_n,f_n\rangle =1$}, and define $R\colon (\lambda_n)\mapsto (\lambda_n y_n)$ for $(\lambda_n)\in D$ and $S\colon (x_n)\mapsto (\langle x_n,f_n\rangle)$ for $(x_n)\in\left(\bigoplus_{n \in \mathbb{N}} E_n\right)_D$.
\end{proof}

\begin{lemma}\label{Lemma:embedEinD}
  Let $D=c_0$ or $D=\ell_1$. Then~$D$ contains a subspace which is isomorphic to~$\bigl(\bigoplus_{n \in \mathbb{N}} \ell_2^n\bigr)_D$.
  \end{lemma}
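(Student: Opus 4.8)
The plan is to construct an explicit isomorphic embedding of $E=\bigl(\bigoplus_{n\in\N}\ell_2^n\bigr)_D$ into $D$ by embedding each summand $\ell_2^n$ into a finite-dimensional ``building block'' $Y_n$ of the appropriate type---namely $Y_n=\ell_\infty^{m_n}$ when $D=c_0$, and $Y_n=\ell_1^{k_n}$ when $D=\ell_1$---with a bound on the distortion that is \emph{uniform} in~$n$, and then to take the $D$-sum of these embeddings. The reason for choosing the blocks in this way is that the $D$-sum of the blocks collapses back to $D$ itself: reindexing the countably infinite disjoint union $\Lambda=\coprod_{n\in\N}\{1,\dots,\dim Y_n\}$ of the coordinate sets shows that $\bigl(\bigoplus_{n\in\N}\ell_\infty^{m_n}\bigr)_{c_0}$ is isometrically $c_0(\Lambda)=c_0$, and likewise $\bigl(\bigoplus_{n\in\N}\ell_1^{k_n}\bigr)_{\ell_1}$ is isometrically $\ell_1(\Lambda)=\ell_1$.

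I would first record the (routine) reduction. Suppose that for each $n$ we have an operator $T_n\colon\ell_2^n\to Y_n$ and constants $0<c\le C$, independent of~$n$, with $c\lVert x\rVert\le\lVert T_nx\rVert\le C\lVert x\rVert$ for all $x\in\ell_2^n$. Then the diagonal map $T\colon(x_n)_n\mapsto(T_nx_n)_n$ carries $E$ into $\bigl(\bigoplus_{n\in\N}Y_n\bigr)_D$ and satisfies $c\lVert(x_n)_n\rVert\le\lVert T(x_n)_n\rVert\le C\lVert(x_n)_n\rVert$, since the $D$-norm is computed coordinatewise (as a supremum when $D=c_0$, as a sum when $D=\ell_1$); in particular $T$ is an isomorphism onto its image. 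Composing $T$ with the isometric identification of $\bigl(\bigoplus_n Y_n\bigr)_D$ with~$D$ above then exhibits~$E$ as a subspace of~$D$, up to isomorphism. Everything thus reduces to producing the uniformly bounded embeddings~$T_n$.

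When $D=c_0$ this is elementary. Fix $\delta\in(0,1)$ and, for each~$n$, choose a finite $\delta$-net $f_1,\dots,f_{m_n}$ of the (compact) unit sphere of~$\ell_2^n$. Since $\lVert x\rVert_2=\max_{\lVert f\rVert_2=1}\lvert\langle x,f\rangle\rvert$, approximating the norming functional $x/\lVert x\rVert_2$ by a net point gives $(1-\delta)\lVert x\rVert_2\le\max_{i}\lvert\langle x,f_i\rangle\rvert\le\lVert x\rVert_2$, so the map $T_n\colon x\mapsto(\langle x,f_i\rangle)_{i=1}^{m_n}\in\ell_\infty^{m_n}$ has distortion at most $1/(1-\delta)$, uniformly in~$n$.

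The case $D=\ell_1$ is the substantive one, and I expect it to be the main obstacle. Here I would invoke the classical fact that $\ell_2$ embeds \emph{isometrically} into~$L_1$: if $g_1,\dots,g_n$ are independent standard Gaussian variables, then $\sum_i a_ig_i$ is Gaussian with standard deviation $\lVert a\rVert_2$, so $a\mapsto\sqrt{\pi/2}\,\sum_i a_ig_i$ is an isometry of $\ell_2^n$ into~$L_1$. One then discretises: approximating the finitely many functions $T_ne_1,\dots,T_ne_n\in L_1$ in $L_1$-norm by simple functions adapted to a sufficiently fine finite measurable partition yields an embedding of $\ell_2^n$ into a weighted $\ell_1^{k_n}$ (which is isometrically an ordinary $\ell_1^{k_n}$ after rescaling coordinates), with distortion, say, at most $1+\tfrac1n$. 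The only point requiring genuine care is that the approximation must be controlled simultaneously over the whole unit ball of $\ell_2^n$ rather than merely on the basis vectors; this is achieved by forcing $\max_i\lVert T_ne_i-s_i\rVert_{L_1}\le\varepsilon_n/\sqrt{n}$, since $\lVert T_na-\sum_i a_is_i\rVert_{L_1}\le\lVert a\rVert_1\max_i\lVert T_ne_i-s_i\rVert_{L_1}\le\sqrt{n}\,\lVert a\rVert_2\max_i\lVert T_ne_i-s_i\rVert_{L_1}$. Alternatively this uniform embedding of $\ell_2^n$ into $\ell_1^{k_n}$ may simply be quoted from the literature on finite-dimensional subspaces of~$L_1$. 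Either way the distortions are uniformly bounded, and feeding these $T_n$ into the reduction of the second paragraph completes the proof.
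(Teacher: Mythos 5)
Your proof is correct and takes essentially the same route as the paper's, whose two-line proof simply combines the fact that $D$ contains almost isometric copies of $\ell_2^n$ for every $n\in\N$ with the fact that $D$ is isomorphic to the $D$-direct sum of countably many copies of itself. Your argument is a detailed implementation of exactly these two ingredients: the net argument (for $c_0$) and the Gaussian discretisation (for $\ell_1$) supply the uniformly almost isometric copies of~$\ell_2^n$, while your reindexing of the $D$-sum of finite-dimensional blocks is the standard self-similarity of~$D$ that the paper invokes.
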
   
\begin{proof} 
  This follows by combining the
  fact that~$D$ contains almost isometric copies of~$\ell_2^n$ for  every $n\in\N$ with the fact that
$D$ is isomorphic to the $D$-direct sum of countably many copies of itself.
\end{proof}

\begin{lemma}\label{Lemma:4cases}
  Let $(D,D_\Gamma) = (c_0,c_0(\Gamma))$ or $(D,D_\Gamma) = (\ell_1,\ell_1(\Gamma))$ for some infinite set~$\Gamma,$ and set $E=\bigl(\bigoplus_{n \in \mathbb{N}} \ell_2^n\bigr)_D$. Then the identity operator on~$D$ factors through every
non-compact operator belonging to either~$\mcb(E)$, $\mcb(D_\Gamma)$, $\mcb(E;D_\Gamma)$ or~$\mcb(D_\Gamma;E)$.  
\end{lemma}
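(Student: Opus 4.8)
The plan is to prove, uniformly across all four cases, that a non-compact operator $T$ is bounded below on some copy of~$D$ whose image under~$T$ is complemented in the codomain; from such a copy one reads off operators $A$ and~$B$ with $BTA=I_D$ at once. First I would reduce every case to operators between the \emph{separable} spaces $D$ and~$E$. Since $T$ is non-compact, its unit ball contains a sequence $(\xi_k)$ with $\inf_{k\neq l}\lVert T\xi_k-T\xi_l\rVert=\delta>0$. Whenever $D_\Gamma$ occurs as domain or codomain, each $\xi_k$ (respectively each $T\xi_k$) has countable support, so the union~$\Delta$ of these supports is a countable subset of~$\Gamma$ and $P_\Delta(D_\Gamma)$ is a complemented copy of~$D$. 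Replacing $T$ by the compression $P_{\Delta'}TP_\Delta$ (with whichever projections are relevant) leaves $(T\xi_k)$ unchanged and hence keeps the operator non-compact, and a factorisation $I_D=B_0(P_{\Delta'}TP_\Delta)A_0$ gives $I_D=(B_0P_{\Delta'})\,T\,(P_\Delta A_0)$, a factorisation through the original~$T$. Thus it suffices to treat a non-compact $T$ lying in one of $\mcb(D)$, $\mcb(E)$, $\mcb(E;D)$ or $\mcb(D;E)$.

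Next I would exploit that $D$ and~$E$ carry a common finite-dimensional Schauder decomposition $\bigl(\bigoplus_n Z_n\bigr)_D$, namely $Z_n=\mathbb{K}$ for~$D$ and $Z_n=\ell_2^n$ for~$E$, with the \emph{same} outer space~$D$ in both. Only two facts about such a decomposition are needed: (a) selecting one unit vector from each block spans a complemented copy of~$D$, which is exactly \cref{lemmaFactorIdofc0thruE}; and (b) every normalised block sequence spans a complemented copy of~$D$ carried by a norm-one projection. Both follow immediately from the form of the $c_0$- or $\ell_1$-norm of a sum, since disjointly supported blocks are isometrically $c_0$- or $\ell_1$-distributed and the natural coordinate functionals assemble into a norm-one projection.

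The heart of the argument is a simultaneous gliding-hump extraction. Write $R_N^Z$ and $R_N^W$ for the projections onto the first $N$ blocks of domain and codomain; these have finite rank because the blocks $\mathbb{K}$ and $\ell_2^n$ are finite-dimensional, so $R_N^Z$ and $R_N^W T$ are compact. A diagonal argument then yields a subsequence along which $R_N^Z\xi_k$ and $R_N^W T\xi_k$ both converge for every~$N$; passing to the differences $\eta_k=\xi_{2k}-\xi_{2k-1}$ produces a bounded sequence with $R_N^Z\eta_k\to 0$ and $R_N^W T\eta_k\to 0$ for each~$N$, while $\lVert T\eta_k\rVert\geq\delta$ forces $\lVert\eta_k\rVert\geq\delta/\lVert T\rVert$. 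Interleaving the two escape-to-infinity conditions, I would select a single subsequence $(\eta_{k_j})$ that is, up to summably small perturbations, a normalised block sequence in the domain, while its image $(T\eta_{k_j})$ is likewise close to a block sequence $(w_j)$ in the codomain with $\lVert w_j\rVert\geq\delta/2$. By fact~(b) and a routine small-perturbation argument, the map $A\colon D\to Z$, $e_j\mapsto\eta_{k_j}$, is then an isomorphism onto a complemented subspace, and $TA$ is an isomorphism of~$D$ onto the complemented subspace $\overline{\operatorname{span}}\{T\eta_{k_j}\}$; writing $P$ for the associated projection and $B=(TA)^{-1}P$ gives $BTA=I_D$, as required.

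The main obstacle is this last extraction: producing one subsequence that blocks simultaneously in domain and codomain while preserving the lower bound $\lVert T\eta_{k_j}\rVert\geq\delta/2$. The enabling observation is the finite-rankness of the initial-segment projections, which is what lets the passage to differences annihilate \emph{all} coordinate limits at once so that the Bessaga--Pe\l{}czy\'nski selection principle applies on both sides through a single diagonalisation. For $D=\ell_1$ the domain-side blocking is in fact automatic, since every bounded sequence induces a bounded operator from~$\ell_1$; it is the $c_0$ case that genuinely requires controlling the domain as well, and carrying both controls through one subsequence is the delicate point.
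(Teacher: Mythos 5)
Your proposal is correct, but it takes a genuinely different route from the paper. The paper treats the four cases separately, by reduction and citation: for $T\in\mcb(E)$ it invokes \cite[Corollary~3.8 and Example~3.9]{LLR}; for $T\in\mcb(D_\Gamma)$ it extracts from a careful examination of the proofs in~\cite{MD} operators $R,S$ with $STR=P_\Delta$ for an infinite $\Delta\subseteq\Gamma$, which it then cuts down to a copy of~$D$; and it reduces the two mixed cases $\mcb(E;D_\Gamma)$ and $\mcb(D_\Gamma;E)$ to these via \cref{Lemma:FactorThroughD}\ref{Lemma:FactorThroughD1}, \cref{lemmaFactorIdofc0thruE} and \cref{Lemma:embedEinD}. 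You instead give a single, uniform, self-contained argument: after compressing to countable supports (a step that mirrors \cref{Lemma:FactorThroughD}\ref{Lemma:FactorThroughD1}), you exploit that both $D$ and $E$ are $D$-direct sums of finite-dimensional blocks, run a simultaneous gliding-hump/Bessaga--Pe\l{}czy\'nski extraction on $(\eta_k)$ and $(T\eta_k)$, and use that seminormalised block sequences in such sums span isometric copies of~$D$ that are complemented by norm-one projections built from block-supported coordinate functionals --- a fact which does hold for $D\in\{c_0,\ell_1\}$ and survives summably small perturbations. In effect you reprove, in exactly the special cases needed, the factorisation results of \cite{LLR} and \cite{MD} rather than quoting them; your correct observation that the domain-side blocking is only genuinely needed when $D=c_0$ is consistent with this. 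What your approach buys is self-containedness and uniformity, and in particular it avoids the paper's reliance on ``a careful examination of the proofs'' of~\cite{MD}, which is the least robust citation in the paper's argument. What the paper's approach buys is brevity: the results of \cite{MD} and \cite{LLR} must be quoted anyway, in the form of the ideal classifications~\eqref{Eq:ClosedIdealsofc0Gamma} and~\eqref{Eq:ClosedIdealsofBE}, in the proof of \cref{mainthm}, so the authors see no cost in citing them here too. Two small points to tidy in a write-up: your block sequences are seminormalised rather than normalised (with lower bounds $\delta/\lVert T\rVert$ and $\delta/2$ respectively), and the perturbation parameters must be chosen summably small \emph{relative to}~$\delta$, so that the perturbed image sequence remains equivalent to the unit vector basis of~$D$ and its closed span complemented; both are routine.
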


\begin{proof} Let $T$ be a non-compact operator. We examine each of the four cases separately:
\begin{enumerate}[label={\normalfont{(\roman*)}}]
\item\label{Lemma:4cases1} If $T\in\mcb(E)$, then~$I_D$ factors through~$T$ by \cite[Corollary~3.8 and Example~3.9]{LLR}.
\item\label{Lemma:4cases2} For $T\in\mcb(D_\Gamma)$,  a careful examination of the proofs of \cite[Proposition~4.3 and Theorems~6.2 and~7.3]{MD} shows that there are 
  operators $R,S\in\mcb(D_\Gamma)$
  such that $STR=P_\Delta$ for some infinite subset $\Delta$ of~$\Gamma$.
  Choose an infinite sequence $(\gamma_n)$ of distinct elements in~$\Delta$, and define operators 
$U\colon D\to D_\Gamma$ and $V\colon D_\Gamma\to D$ by $U(e_n)  = e_{\gamma_n}$ and $V(e_{\gamma_n}) =  e_n$ 
  for each $n\in\mathbb{N}$, and $V(e_\gamma)=0$ for $\gamma\in\Gamma\setminus\{\gamma_n:n\in\mathbb{N}\}$.
  Then we have $VSTRU=I_D$.
\item\label{Lemma:4cases3} For $T\in\mcb(E;D_\Gamma)$, \cref{Lemma:FactorThroughD}\ref{Lemma:FactorThroughD1} implies that  $T=P_{\Delta}T$ 
  for some countable sub\-set~$\Delta$ of~$\Gamma$.
  Note that~$\Delta$ is infinite, as other\-wise $P_{\Delta}T$ would be compact. Enumerate~$\Delta$ as $\{\gamma_n: n\in\N\}$. Then, defining the operators $U\colon D\to D_\Gamma$ and $V\colon D_\Gamma\to D$
  as in case~\ref{Lemma:4cases2}, we have $UV=P_\Delta$. 
  Choose operators $R\colon D\to E$ and $S\colon E\to D$ as in \Cref{lemmaFactorIdofc0thruE}, and observe that $RVT$  is non-compact, as other\-wise $(US)(RVT)= P_\Delta T=T$ would be compact. Now the conclusion follows by applying
  case~\ref{Lemma:4cases1} to the operator~$RVT\in\mcb(E)$.
\item\label{Lemma:4cases4} Finally, suppose that $T\in\mcb(D_\Gamma;E)$. By \cref{Lemma:embedEinD} and the fact that~$\Gamma$ is infinite, we can find an isomorphic embedding $U\in\mcb(E;D_\Gamma)$. Then~$UT$ is non-compact, and the conclusion follows by applying
  case~\ref{Lemma:4cases2} to the operator $UT\in\mcb(D_\Gamma)$. \qedhere
\end{enumerate}   
\end{proof}

\begin{remark}
\cref{Lemma:4cases} is also true for $(D,D_\Gamma) = (\ell_p,\ell_p(\Gamma))$ when $1<p<\infty$. However, $E=\bigl(\bigoplus\ell_2^n\bigr)_{\ell_p}$ is isomorphic to~$\ell_p$ in these cases, so only case~\ref{Lemma:4cases2} above would be non-trivial.
\end{remark}  

\begin{corollary}\label{Prop:idealdichotomy}
  Let $X = \left(\bigoplus_{n \in \mathbb{N}} \ell_2^n\right)_D\oplus D_\Gamma,$ where $(D,D_\Gamma) = (c_0,c_0(\Gamma))$ or $(D,D_\Gamma) = (\ell_1,\ell_1(\Gamma))$ for some infinite set~$\Gamma,$ and let $\mci$ be an ideal of $\mcb(X)$. Then either \mbox{$\mci\subseteq\mck(X)$} or $\mcg_{D}(X)\subseteq\mci$.
\end{corollary}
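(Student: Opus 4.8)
The plan is to combine \cref{Lemma:4cases} with the ideal property of~$\mci$. If $\mci \subseteq \mck(X)$ there is nothing to prove, so I assume instead that~$\mci$ contains a \emph{non-compact} operator~$T$, and aim to deduce $\mcg_{D}(X) \subseteq \mci$. Writing $X = E \oplus D_\Gamma$ with $E = \bigl(\bigoplus_{n\in\N}\ell_2^n\bigr)_D$, so that $X_1 = E$ and $X_2 = D_\Gamma$, I would apply the characterization~\eqref{Eq:Prop2.2} to the operator ideal~$\mck$: since~$T$ is non-compact, at least one of its four quadrants $T_{i,j} = Q_iTJ_j$ (for $i,j\in\{1,2\}$) must fail to be compact.

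This single non-compact quadrant is the entire leverage I need. Indeed, $T_{i,j}$ is a non-compact operator lying in one of the four spaces $\mcb(E)$, $\mcb(D_\Gamma;E)$, $\mcb(E;D_\Gamma)$ or $\mcb(D_\Gamma)$, which is precisely the setting of \cref{Lemma:4cases}. That lemma therefore supplies operators $A \in \mcb(D;X_j)$ and $B \in \mcb(X_i;D)$ with $BT_{i,j}A = I_D$. Substituting $T_{i,j} = Q_iTJ_j$ and putting $\widetilde{A} = J_jA \in \mcb(D;X)$ and $\widetilde{B} = BQ_i \in \mcb(X;D)$, I obtain $\widetilde{B}\,T\,\widetilde{A} = I_D$, a factorization of the identity on~$D$ through the \emph{single} operator $T \in \mci$.

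Given this factorization, the inclusion $\mcg_{D}(X) \subseteq \mci$ follows by the standard argument. Any $S \in \mcg_{D}(X)$ may be written $S = VU$ with $U \in \mcb(X;D)$ and $V \in \mcb(D;X)$, and then
\[ S = V\,I_D\,U = V\widetilde{B}\,T\,\widetilde{A}U = (V\widetilde{B})\,T\,(\widetilde{A}U)\in\mci, \]
because $V\widetilde{B}$ and $\widetilde{A}U$ belong to~$\mcb(X)$, $T$ belongs to~$\mci$, and $\mci$ is an ideal.

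I do not expect a genuine obstacle, since the hard content has already been isolated in \cref{Lemma:4cases}. The only points needing care are organisational: recognising that it suffices to produce \emph{one} non-compact quadrant rather than to control all four, and keeping the embeddings~$J_j$ and projections~$Q_i$ straight when lifting the factorization of~$I_D$ through~$T_{i,j}$ up to a factorization through the full operator~$T$.
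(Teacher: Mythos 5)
Your proposal is correct and follows essentially the same route as the paper's own proof: choose a non-compact $T\in\mci$, use~\eqref{Eq:Prop2.2} to extract a non-compact quadrant $T_{i,j}$, apply \cref{Lemma:4cases} to factor $I_D$ through $T_{i,j}$, and then absorb $Q_i$ and $J_j$ into the outer factors to conclude via the ideal property. The only differences are notational.
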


\begin{proof}
 For notational convenience, write $X = X_1\oplus X_2$, where $X_1=\left(\bigoplus_{n \in \mathbb{N}} \ell_2^n\right)_D$ and $X_2 = D_\Gamma$. 
   Suppose that  $\mci\nsubseteq\mck(X)$, and choose $T\in\mci\setminus\mck(X)$. Then~\eqref{Eq:Prop2.2} shows that  $T_{i,j}\notin\mck(X_j;X_i)$ for some $i,j\in\{1,2\}$. \cref{Lemma:4cases} implies that there are operators $U\colon D\to X_j$ and $V\colon X_i\to D$ such that 
   $VT_{i,j}U=I_{D}$.
Hence, for each $S = R_2R_1\in\mcg_D(X)$, where $R_1\in\mcb(X;D)$ and $R_2\in\mcb(D;X)$, we have
  \[ S = R_2VT_{i,j}UR_1= (R_2VQ_i)T(J_jUR_1)\in\mci \]
because~$\mci$ is an ideal of~$\mcb(X)$. This shows that $\mcg_D(X)\subseteq\mci$, as desired. 
\end{proof}

For a Banach space $X$, define 
\[ \Xi(X) = \{\mci: \mci\ \text{is a closed ideal of}\ \mcb(X)\  \text{and}\ \mci \supsetneq \mck(X)\}\;,         \] and order $\Xi(X)$ by inclusion.
For a pair of Banach spaces $X_1$ and $X_2$, we  endow the set $\Xi(X_1)\times\Xi(X_2)$ with the product order; that is, \[ (\mci_1,\mci_2) \leq (\mcj_1,\mcj_2)\ \iff\ [\mci_1\subseteq \mcj_1] \land [\mci_2 \subseteq \mcj_2]\;. \] 

\begin{proposition}\label{orderisomorphism}
  Let $X = E\oplus D_\Gamma$, where $E=\left(\bigoplus_{n \in \mathbb{N}} \ell_2^n\right)_D$ and 
  either $(D,D_\Gamma) = (c_0,c_0(\Gamma))$ or $(D,D_\Gamma) = (\ell_1,\ell_1(\Gamma))$ for some infinite set~$\Gamma$.
  The map \[ \xi\;\colon\ \Xi(E) \times \Xi(D_\Gamma) \to \Xi(X)\;;\ (\mci,\mcj)\mapsto     \begin{pmatrix}
  \mci  & \mcb(D_\Gamma;E) \\
    \mcb(E; D_\Gamma) & \mcj \end{pmatrix}\;, \]
is an order isomorphism.
\end{proposition}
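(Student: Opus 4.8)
The plan is to verify, in turn, that $\xi$ is well defined (its image really lands in $\Xi(X)$), that it preserves and reflects inclusion (hence is injective), and finally that it is surjective, the last point being where the real work lies. Throughout, the engine is \cref{LemmaDiagonalIdeal}, which lets me pass freely between an ideal of $\mcb(X)$ and the matrix of its four quadrants, together with the two factorisation identities of \cref{Cor:FactorThroughD} (available because $E$ is separable) and the two known ideal lattices \eqref{Eq:ClosedIdealsofc0Gamma} and \eqref{Eq:ClosedIdealsofBE}.

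First I would check that $\xi(\mci,\mcj)$ is a closed ideal of $\mcb(X)$ properly containing $\mck(X)$. Writing a general element of $\xi(\mci,\mcj)$ and an arbitrary $S\in\mcb(X)$ as $2\times2$ matrices and multiplying, the only terms not obviously in the correct place are the cross terms landing in the two diagonal corners. A cross term in the $(1,1)$ corner has the shape $S_{1,2}T_{2,1}$ with $T_{2,1}\in\mcb(E;D_\Gamma)$; by \cref{Cor:FactorThroughD} this factor lies in $\mcg_{D}(E;D_\Gamma)$, so the product lies in $\mcg_{D}(E)$, and since $\mci\in\Xi(E)$ the lattice \eqref{Eq:ClosedIdealsofBE} forces $\mci\supseteq\overline{\mcg_{D}}(E)\supseteq\mcg_{D}(E)$, so the term lies in $\mci$. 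Symmetrically, a cross term in the $(2,2)$ corner lies in $\mcg_{D}(D_\Gamma)$, which consists of separable-range, hence $\aleph_1$-compact, operators, so it lies in $\mck_{\aleph_1}(D_\Gamma)\subseteq\mcj$ by \eqref{Eq:ClosedIdealsofc0Gamma}. The off-diagonal corners of the product are automatically in the full spaces $\mcb(D_\Gamma;E)$ and $\mcb(E;D_\Gamma)$. Hence $\xi(\mci,\mcj)$ is an ideal; it is closed by the final clause of \cref{LemmaDiagonalIdeal}, since each of its four quadrants is closed, and it properly contains $\mck(X)$ because $\mci\supsetneq\mck(E)$.

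Injectivity and the order properties are then immediate from quadrant extraction: the $(1,1)$- and $(2,2)$-quadrants of $\xi(\mci,\mcj)$ are exactly $\mci$ and $\mcj$, so $\mci_1\subseteq\mci_2$ and $\mcj_1\subseteq\mcj_2$ hold if and only if $\xi(\mci_1,\mcj_1)\subseteq\xi(\mci_2,\mcj_2)$. Thus $\xi$ preserves and reflects inclusion, and in particular is injective; a surjective such map is an order isomorphism, so it only remains to prove surjectivity.

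The main obstacle is exactly this surjectivity. Given $\mcl\in\Xi(X)$, \cref{LemmaDiagonalIdeal} writes $\mcl$ as the matrix of its quadrants, with $\mcl_{1,1}$ a closed ideal of $\mcb(E)$ and $\mcl_{2,2}$ a closed ideal of $\mcb(D_\Gamma)$. Since $\mcl\supsetneq\mck(X)$, the dichotomy of \cref{Prop:idealdichotomy} forces $\mcg_{D}(X)\subseteq\mcl$. Extracting the off-diagonal quadrants of $\mcg_{D}(X)$ and applying \cref{Cor:FactorThroughD}, these are the full spaces $\mcg_{D}(D_\Gamma;E)=\mcb(D_\Gamma;E)$ and $\mcg_{D}(E;D_\Gamma)=\mcb(E;D_\Gamma)$; hence $\mcl_{1,2}$ and $\mcl_{2,1}$, trapped between these and the full operator spaces, must equal them. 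The same inclusion gives $\mcl_{1,1}\supseteq\mcg_{D}(E)$ and $\mcl_{2,2}\supseteq\mcg_{D}(D_\Gamma)$: being closed and containing $\mcg_{D}(E)$, the ideal $\mcl_{1,1}$ contains $\overline{\mcg_{D}}(E)$ and so strictly contains $\mck(E)$, while $\mcl_{2,2}\supseteq\mcg_{D}(D_\Gamma)$ contains a non-compact operator (for instance a basis projection onto a countably infinite subset of $\Gamma$) and so strictly contains $\mck(D_\Gamma)$. Therefore $\mcl_{1,1}\in\Xi(E)$ and $\mcl_{2,2}\in\Xi(D_\Gamma)$, and $\mcl=\xi(\mcl_{1,1},\mcl_{2,2})$. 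The crux is this forcing of the off-diagonal corners to be full, which is precisely where the dichotomy and the separability-driven factorisation of \cref{Cor:FactorThroughD} combine.
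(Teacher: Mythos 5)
Your proof is correct and takes essentially the same route as the paper's: well-definedness via \cref{Cor:FactorThroughD}, the lattices \eqref{Eq:ClosedIdealsofBE} and \eqref{Eq:ClosedIdealsofc0Gamma}, and the closedness clause of \cref{LemmaDiagonalIdeal}; order-preservation and injectivity by quadrant extraction; and surjectivity by combining \cref{LemmaDiagonalIdeal}, \cref{Prop:idealdichotomy} and \cref{Cor:FactorThroughD} to force the off-diagonal quadrants to be full. Your explicit matrix-multiplication check and the separable-range/$\aleph_1$-compactness observation merely spell out what the paper leaves as ``easily verified''.
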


\begin{proof} Recall from \cref{Cor:FactorThroughD}
  that $\mcb(E;D_\Gamma)=\mcg_{D}(E;D_\Gamma)$ and $\mcb(D_\Gamma;E) = \mcg_{D}(D_\Gamma;E)$, and that $\mcg_{D}(E)\subseteq\mci$
  and $\mcg_{D}(D_\Gamma)\subseteq\mcj$  for every 
  $(\mci,\mcj)\in \Xi(E)\times\Xi(D_\Gamma)$ by the ideal classifications~\eqref{Eq:ClosedIdealsofBE} and~\eqref{Eq:ClosedIdealsofc0Gamma}, respectively. Using these facts, one can easily verify that~$\xi(\mci,\mcj)$ is an ideal of~$\mcb(X)$ with $\mck(X)\subsetneq\xi(\mci,\mcj)$.  Moreover, $\xi(\mci,\mcj)$ is closed by \Cref{LemmaDiagonalIdeal}, so it belongs to~$\Xi(X)$.

  To see that~$\xi$ is surjective, let $\mcl\in\Xi(X)$. \Cref{LemmaDiagonalIdeal} shows that
  \[ \mcl = \begin{pmatrix} \mcl_{1,1} & \mcl_{1,2}\\ \mcl_{2,1} & \mcl_{2,2} \end{pmatrix}, \]
  where  $\mcl_{1,1}$ and $\mcl_{2,2}$ are closed ideals of~$\mcb(E)$ and~$\mcb(D_\Gamma)$, respectively. Moreover,  \Cref{Prop:idealdichotomy}  implies that $\mcg_{D}(X)\subseteq\mcl$, so by~\eqref{Eq:Prop2.2}, we have:
  \begin{itemize}
  \item $\mcl_{1,1}\supseteq\mcg_{D}(E)$, so $\mcl_{1,1}\in\Xi(E)$;
  \item $\mcl_{1,2}\supseteq\mcg_{D}(D_\Gamma;E)=\mcb(D_\Gamma;E)$, so $\mcl_{1,2}=\mcb(D_\Gamma;E)$, and similarly $\mcl_{2,1}=\mcb(E;D_\Gamma)$;
  \item $\mcl_{2,2}\supseteq\mcg_{D}(D_\Gamma)$,  so $\mcl_{2,2}\in\Xi(D_\Gamma)$.
    \end{itemize}
This verifies that $\mcl = \xi(\mcl_{1,1},\mcl_{2,2})$. 

Finally, working straight from the definitions, we see that
$(\mci_1,\mcj_1)\le(\mci_2,\mcj_2)$ if and only if $\xi(\mci_1,\mcj_1)\subseteq\xi(\mci_2,\mcj_2)$ for
$(\mci_1,\mcj_1),(\mci_2,\mcj_2)\in\Xi(E) \times \Xi(D_\Gamma)$. This shows first that~$\xi$ is injective and thus a bijection, and secondly that both~$\xi$ and its inverse   are order-preserving. 
\end{proof}

We can now prove  \cref{mainthm} easily. 

\begin{proof}[Proof of \cref{mainthm}]
  Both~$E$ and~$D_\Gamma$ have the approximation property, so also must their direct sum~$X$. Therefore~$\mck(X)$ is the smallest non-zero closed ideal of $\mcb(X)$. 
  \cref{orderisomorphism} shows that any other non-zero closed ideal~$\mcl$ of~$\mcb(X)$ has the form $\mcl = \xi(\mci,\mcj)$ for unique closed ideals $\mci\in \Xi(E)$ and $\mcj\in\Xi(D_\Gamma)$. By the ideal classifications~\eqref{Eq:ClosedIdealsofBE} and~\eqref{Eq:ClosedIdealsofc0Gamma},
    either
  $\mci= \overline{\mcg_{D}}(E)$ or $\mci=\mcb(E)$, while $\mcj = \mck_\kappa(D_\Gamma)$ for a unique cardinal $\aleph_1\le\kappa\le\Gamma^+$.

  Suppose first that $\mci= \overline{\mcg_{D}}(E)$. If $\kappa=\aleph_1$, then $\mcj = \overline{\mcg_{D}}(D_\Gamma)$, so $\mcl=\overline{\mcg_{D}}(X)$. Otherwise $\kappa\ge\aleph_2$ and $\mcl=\mcj_\kappa(X)$ in the notation of~\eqref{Eq:mainthm}.

  Next, suppose that $\mci=\mcb(E)$, which is equal to~$\mck_\kappa(E)$ because $E$ has density $\aleph_0<\kappa$. Hence we have
  $\mcl = \mck_\kappa(X)$. (Note that this is equal to~$\mcb(X)$ if $\kappa=\Gamma^+$.)
\end{proof}

\subsection*{Ackonowledgements} We are grateful to Dr Tomasz Kania (Czech Academy of Sciences and  Jagiellonian University) for his feedback on an earlier version of this manuscript.

\end{document}